\newtheorem{theorem}{Theorem}
\newtheorem{lemma}{Lemma}
\newtheorem{assumption}{Assumption}
\newcommand{\R}{\mathbb{R}}
\newcommand{\N}{\mathbb{N}}
\newcommand{\Ex}{\mathbb{E}}
\newcommand{\T}{\mathbb{T}}
\newcommand{\Y}{\mathbb{Y}}
\newcommand{\X}{\mathbb{X}}
\newcommand{\bP}{\mathbf{P}}
\newcommand{\bQ}{\mathbf{Q}}
\newcommand{\eqdef}{\triangleq}
\newcommand{\bK}{\mathbf{K}}
\newcommand{\bS}{\mathbf{S}}
\newcommand{\bR}{\mathbf{R}}
\newcommand{\bU}{\mathbf{U}}
\newcommand{\Rn}{\R^n}
\newcommand{\scrA}{\mathcal{A}}
\newcommand{\scrB}{\mathcal{B}}
\newcommand{\scrC}{\mathcal{C}}
\newcommand{\scrE}{\mathcal{E}}
\newcommand{\scrW}{\mathcal{W}}
\newcommand{\scrF}{\mathcal{F}}
\newcommand{\scrH}{\mathcal{H}}
\newcommand{\scrG}{\mathcal{G}}
\newcommand{\scrN}{\mathcal{N}}
\newcommand{\scrP}{\mathcal{P}}
\newcommand{\scrX}{\mathcal{X}}
\newcommand{\scrZ}{\mathcal{Z}}
\newcommand{\cm}{\mathtt{m}}
\newcommand{\cM}{\mathtt{M}}
\newcommand{\setA}{\mathsf{A}}
\newcommand{\setC}{\mathsf{C}}
\newcommand{\setG}{\mathsf{G}}
\newcommand{\setM}{\mathsf{M}}
\newcommand{\setN}{\mathsf{N}}
\newcommand{\setK}{\mathsf{K}}
\renewcommand{\Pr}{\mathbb{P}}
\newcommand{\inner}[1]{\langle#1\rangle}
\newcommand{\norm}[1]{\|#1\|}
\DeclareMathOperator{\rint}{relint}
\DeclareMathOperator*{\argmin}{argmin}
\DeclareMathOperator{\prox}{Prox}
\DeclareMathOperator{\dom}{dom}
\DeclareMathOperator{\Id}{Id}
\DeclareMathOperator{\Zer}{Zer}
\DeclareMathOperator{\Fix}{Fix}
\title{\LARGE \bf
Mini-batch stochastic three-operator splitting for distributed optimization
}
\author{Barbara Franci and Mathias Staudigl$^{1}$
\thanks{*We thank Olivier Bilenne for his contribution to the early stage of this research. M. Staudigl acknowledges financial support from the FMJH Program PGMO and from the support of EDF. }
\thanks{$^{1}$The authors are with the Department of Data Science and Knowledge Engineering, Maastricht University, P.O. Box 616, NL\textendash 6200 MD Maastricht, The Netherlands
        {\tt\small b.franci@maastrichtuniversity.nl, m.staudigl@maastrichtuniversity.nl}}}
\begin{document}

\maketitle
\thispagestyle{empty}
\pagestyle{empty}

\begin{abstract}
We consider a network of agents, each with its own private cost consisting of a sum of two possibly nonsmooth convex functions, one of which is composed with a linear operator. At every iteration each agent performs local calculations and can only communicate with its neighbors. The challenging aspect of our study is that the smooth part of the private cost function is given as an expected value and agents only have access to this part of the problem formulation via a heavy-tailed stochastic oracle. To tackle such sampling-based optimization problems, we propose a stochastic extension of the triangular pre-conditioned primal-dual algorithm. We demonstrate almost sure convergence of the scheme and validate the performance of the method via numerical experiments. 
\end{abstract}

\section{Introduction}

We consider a large class of convex optimization problems given by 
\begin{equation}\label{eq:Opt}
\min_{x\in\Rn}f(x)+g(x)+h(Lx),
\end{equation}
where $f:\Rn\to\R$ is a convex and continuously differentiable function, $g:\Rn\to(-\infty,\infty]$ and $h:\R^{m}\to(-\infty,\infty]$ are closed convex and lower semi-continuous functions, and $L:\Rn\to\R^{m}$ is a given linear map. Such a structure is very general, and and describes many applications that range from signal processing to machine learning to control \cite{Jin:2021wk,SLS15,li2021}. In many instances of problem \eqref{eq:Opt}, the cost functions are contaminated by stochastic noise. In such settings, we are given a probability space $(\Omega,\scrA,\Pr)$ carrying a random variable $\xi:\Omega\to\Xi\subset\R^{d}$, and a measurable function $F:\Rn\times\Xi\to\R$ so that 
\begin{equation}\label{eq:f}
f(x)=\Ex[F(x,\xi)]\qquad\forall x\in\Rn.
\end{equation}
The presence of stochastic uncertainty challenges any direct solution method for problem \eqref{eq:Opt}, since the smooth function $f$ is not directly accessible in practice, unless the distribution of the random variable is known. Indeed, the expectation \eqref{eq:f} cannot be computed exactly, and instead we need to restore to simulation-based techniques. A reasonable assumption is that we can draw samples from the distribution of the random variable $\xi$, and stochastically approximate the necessary information about the function $f$. Specifically, we adopt an online gradient-based stochastic approximation method where a deterministic version of a numerical algorithm, known to solve problem \eqref{eq:Opt} in its expected-value formulation, is supplied with stochastic estimators of the gradient of $f$. In this setting, the mechanism to access $f$ via samples of the law of $\xi$ is usually named a \emph{stochastic oracle} (SO). The SO outputs unbiased approximations of the gradient obtained via an average over a batch of realizations. The larger the number of samples, the smaller the variance of point estimators, and thus the more precise information we obtain at the cost of generating a larger amount of i.i.d random variables. This trade-off between accuracy of estimators and the available simulation budget is what makes such \emph{mini-batch} stochastic approximation approaches efficient methods of choice \cite{IusJofOliTho17,IusJofOliTho19,Bot:2021vf}.

\subsection{Main Contributions and relation to the literature}
A standard assumption in stochastic optimization is that the approximation error is uniformly bounded \cite{IusJofOliTho19}. 
Instead, the stochastic approximation approach developed in this paper allows to handle stochastic oracles with potentially unbounded moments. This is of relevance in primal-dual dynamics, which usually act on unbounded domains, for which any a-priori variance bound is rather unnatural. 

Besides weaker hypothesis on the noise structure, this paper is the first stochastic primal-dual algorithm which is even provably convergent in the multi-agent formulation of problem \eqref{eq:Opt}, i.e., when a finite set of agents $i\in\{1,\ldots,m\}$ cooperatively minimize the composite objective function 
\begin{equation}\label{eq:DistOpt}
\begin{split}
\min_{x_{i}\in\R^{p_{i}},1\leq i\leq m}\sum_{i=1}^{m}f_{i}(x_{i})+g_{i}(x_{i})+h_{i}(L_{i}x_{i}),\\
\text{s.t.: }A_{ij}x_{i}+A_{ji}x_{j}=b_{(i,j)}\quad\forall (i,j)\in E
\end{split}
\end{equation}
Problems of this form appear in several application fields. In distributed model predictive control, $f_{i}$ can represent individual finite-horizon costs for each agent, $L_{i}$ model linear dynamics of each agent, and $h_{i}$ models state and input constraints \cite{Jin:2021wk}. In machine learning, $f_{i}$ would represent a smooth data fidelity terms, $Lx$ linear restrictions on the parameters and $h_{i}$ and $g_{i}$ take over the role of statistical penalties reflecting a-priori structure properties of the parameters to be estimated \cite{SLS15}.

Compared with existing work, this paper makes the following contributions:
\begin{enumerate}
\item[(i)] Our algorithm is a stochastic extension of the triangular preconditioned primal-dual algorithm (TriPD), developed in \cite{TriPD,LatPat17}. 
\item[(ii)] To the best of our knowledge, our scheme is the first stochastic approximation method which is able to solve the three-operator splitting problem characterizing primal-dual pairs for \eqref{eq:Opt}.
\item[(iii)] The analysis immediately generalizes to the multi-agent formulation \eqref{eq:DistOpt}, where distributed iterations, i.e., locally performed without central supervision, are obtained. 
\end{enumerate}

The only related contributions we are aware of are \cite{YurBanCevNIPS16,ZhaoCev18}. Both assume a uniformly bounded SO, which is very restrictive in primal-dual methods, essentially forcing a-priori compactness on the domain of $g$. For the special case when $g=0$, stochastic accelerated algorithms for the centralized problem \eqref{eq:Opt} have been considered in \cite{ChenLanOu14}, imposing a uniformly bounded noise condition on the SO. We include the non-smooth term $g$ and allow for heavy-tailed noise in the SO.

Our analysis is restricted to synchronous versions of distributed optimization algorithms. It is possible to extend our analysis to the asynchronous case and block-coordinate descent strategies, acting on general real separable Hilbert spaces. We will present these extensions in a future paper. 

\subsection{Notation and preliminary results}
Throughout $\scrH,\scrG$ are finite dimensional Euclidean spaces. Their scalar products and associated norms are denoted by $\inner{\cdot,\cdot}$ and $\norm{\cdot}$. We denote $\scrB(\scrH,\scrG)$ the space of bounded linear operators from $\scrH$ to $\scrG$. The adjoint of $L\in\scrB(\scrH,\scrG)$ is denoted by $L^{\ast}$.  $\Id$ denotes the identity operator. We set $\scrP_{\alpha}(\scrH)\eqdef\{\bQ\in\scrB(\scrH)\vert \bQ^{\ast}=\bQ\text{ and }\bQ\succ\alpha\Id\}.$ For $\alpha>0$ and $\bQ\in\scrP_{\alpha}(\scrH)$, we define a scalar product and norm on $\scrH$ by 
$\inner{x,y}_{\bQ}\eqdef\inner{\bQ x,y}$, and $\norm{x}_{\bQ}\eqdef\sqrt{\inner{\bQ x,x}}$ for all $x,y\in\scrH$. We let $\cM(\bQ)=\max_{u:\norm{u}=1}\inner{u,\bQ u}$ and $\cm(\bQ)=\min_{u:\norm{u}=1} \inner{u,\bQ u}$. For an extended-valued real function $f$, we use $\dom f=\{x\in\scrH\vert f(x)<\infty\}$ for its effective domain. Let $\bQ\in\scrP_{\alpha}(\scrH)$ for some $\alpha>0$. $\delta_{C}$ is the indicator function of the set $C$, that is, $\delta_{C}(x)=0$ if $x\in C$ and $\delta_{C}(x)=\infty$ otherwise. The weighted prox-operator of $f$ is defined as $
\prox^{\bQ}_{f}(x)\eqdef \argmin_{u\in\scrH}\left\{f(u)+\frac{1}{2}\norm{u-x}^{2}_{\bQ}\right\}.$
The conjugate of $f:\scrH\to(-\infty,\infty]$ is $f^{\ast}(u)=\sup_{x\in\scrH}\{\inner{x,u}-f(x)\}.$ We also need the celebrated Robbins-Siegmund Lemma for the convergence analysis 
\begin{lemma}[\cite{RS71}]
\label{lem:RS}
Let $(\Omega,\scrA,(\scrA_{n})_{n\in\N},\Pr)$ be a filtered probability space satisfying the usual conditions. For every $n\in\N$, let $v_{n},\xi_{n},\zeta_{n}$ and $t_{n}$ be non-negative $\scrA_{n}$-measurable random variables such that $\{\zeta_{n}\}_{n\in\N}$ and $\{t_{n}\}_{n\in\N}$ are summable and for all $n\in\N$,
\begin{equation}
\Ex[v_{n}\vert\scrF_{n}]\leq(1+t_{n})v_{n}+\zeta_{n}-\xi_{n}\qquad\Pr-\text{a.s.}
\end{equation}
Then $\{v_{n}\}_{n\in\N}$ converges and $\{\xi_{n}\}_{n\in\N}$ is summable $\Pr$-a.s. 
\end{lemma}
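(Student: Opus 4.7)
The plan is to reduce the assumed recursion to a nonnegative supermartingale problem by absorbing the multiplicative factor $(1+t_n)$ into a rescaling, and then invoke Doob's convergence theorem for supermartingales bounded from below. First, since $t_n\geq 0$ and $\sum_n t_n<\infty$ $\Pr$-a.s., the random product $T_n\eqdef\prod_{k=0}^{n-1}(1+t_k)$ is $\scrA_n$-measurable, nondecreasing, bounded below by $1$, and converges almost surely to a finite limit $T_\infty\in[1,\infty)$. Dividing the hypothesis by $T_{n+1}=(1+t_n)T_n$ and setting $v'_n\eqdef v_n/T_n$, $\zeta'_n\eqdef\zeta_n/T_{n+1}$, $\xi'_n\eqdef\xi_n/T_{n+1}$, the recursion becomes
$$\Ex[v'_{n+1}\vert\scrA_n]\leq v'_n+\zeta'_n-\xi'_n,$$
with the primed quantities nonnegative and $\sum_n\zeta'_n\leq\sum_n\zeta_n<\infty$ $\Pr$-a.s.

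Next, I would introduce $Y_n\eqdef v'_n+\sum_{k=0}^{n-1}\xi'_k-\sum_{k=0}^{n-1}\zeta'_k$. A direct computation from the reduced recursion yields $\Ex[Y_{n+1}\vert\scrA_n]\leq Y_n$, so $(Y_n)_{n\in\N}$ is a supermartingale; although it is not nonnegative, it is bounded below $\Pr$-a.s.\ by $-\sum_{k=0}^{\infty}\zeta'_k>-\infty$, because $v'_n\geq 0$ and $\xi'_k\geq 0$. Doob's theorem for supermartingales bounded from below then produces an almost surely finite limit $Y_\infty$. Since the nonnegative series $\sum_k\zeta'_k$ converges a.s., the process $W_n\eqdef v'_n+\sum_{k<n}\xi'_k=Y_n+\sum_{k<n}\zeta'_k$ also converges a.s.; as the partial sum $\sum_{k<n}\xi'_k$ is monotone nondecreasing in $n$, its convergence is forced, hence $\sum_k\xi'_k<\infty$ a.s.\ and consequently $v'_n$ converges a.s.

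Finally, multiplying by $T_n\to T_\infty$ establishes the almost-sure convergence of $v_n$, while $\sum_k\xi_k=\sum_k T_{k+1}\xi'_k\leq T_\infty\sum_k\xi'_k<\infty$ $\Pr$-a.s.\ delivers the required summability of $(\xi_k)$. The main technical obstacle is that the natural supermartingale $Y_n$ is not a priori nonnegative, so the classical Doob theorem does not apply verbatim; the summability of $(\zeta_n)$ is precisely what supplies the almost-sure lower bound making the version for supermartingales bounded from below available. The remainder of the argument is routine bookkeeping, translating the conclusions between $v_n$ and the rescaled process $v'_n$.
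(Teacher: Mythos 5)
First, a point of context: the paper does not prove Lemma~\ref{lem:RS} at all --- it is quoted from Robbins and Siegmund \cite{RS71} --- so the comparison can only be with the classical argument, which your proposal essentially reconstructs: rescale by $T_n=\prod_{k<n}(1+t_k)$ to kill the multiplicative factor, compensate with the partial sums of $\xi'$ and $\zeta'$ to manufacture a supermartingale, and translate back at the end. (You also silently repair the typo in the displayed hypothesis, which should read $\Ex[v_{n+1}\mid\scrA_n]$ rather than $\Ex[v_n\mid\scrF_n]$; that is indeed the intended statement.) The reduction to $\Ex[v'_{n+1}\mid\scrA_n]\le v'_n+\zeta'_n-\xi'_n$, the verification that $Y_n$ is a supermartingale, the monotonicity argument forcing $\sum_k\xi'_k<\infty$, and the final bookkeeping with $T_\infty\in[1,\infty)$ are all correct.

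The one genuine gap is the step ``Doob's theorem for supermartingales bounded from below.'' Your lower bound $Y_n\ge-\sum_{k=0}^{\infty}\zeta'_k$ is a \emph{random} variable that is only almost surely finite; the hypotheses give no integrability for it, so you cannot conclude $\sup_n\Ex[Y_n^-]<\infty$, which is what the classical convergence theorem actually needs (a deterministic, or at least integrable, lower bound). A related issue is that no moment assumptions are made on $v_n,t_n,\zeta_n$, so $Y_n$ need not even be integrable. The standard repair, and the one used in \cite{RS71}, is localization: for $a\in\N$ set $\sigma_a\eqdef\inf\{n\in\N:\sum_{k\le n}\zeta'_k>a\}$, which is a stopping time because $\zeta'_k$ is $\scrA_k$-measurable. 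By construction $\sum_{k<n\wedge\sigma_a}\zeta'_k\le a$, so the stopped process $Y_{n\wedge\sigma_a}$ is a supermartingale bounded below by the constant $-a$ (and, after an additional conditioning or truncation on $v_0$ to secure integrability, an $L^1$-bounded one), hence converges almost surely by the genuine Doob theorem. On the event $\{\sigma_a=\infty\}$ this yields convergence of $Y_n$ itself, and since $\Pr\bigl(\bigcup_{a\in\N}\{\sigma_a=\infty\}\bigr)=\Pr\bigl(\sum_k\zeta'_k<\infty\bigr)=1$, the conclusion holds almost surely. With this localization inserted before the Doob step, the remainder of your argument goes through unchanged.
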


\section{Stochastic Primal-Dual Algorithm}
\label{sec:algorithm}

In this section we propose a stochastic primal-dual algorithm for solving \eqref{eq:Opt}. Let $\scrH\equiv\Rn$ and $\scrG\equiv\R^{m}$ with inner products $\inner{\cdot,\cdot}_{\scrH},\inner{\cdot,\cdot}_{\scrG}$, and corresponding norms $\norm{\cdot}_{\scrH},\norm{\cdot}_{\scrG}$.

\begin{assumption}\label{ass:1}
Throughout the paper the following assumptions shall be in place:
\begin{itemize}
\item[(i)] $g:\scrH\to(-\infty,\infty],h:\scrG\to(-\infty,\infty]$ are proper, closed, convex and lower-semi continuous functions.
\item[(ii)] $L:\scrH\to\scrG$ is a linear mapping with adjoint $L^{\ast}$. 
\item[(iii)] $f:\scrH\to\R$ is convex, continuously differentiable and there exists $\alpha>0,\bQ\in\scrP_{\alpha}(\scrH)$, and $\beta_{f}>0$ such that $
\norm{\nabla f(x)-\nabla f(y)}_{\bQ^{-1}}\leq \beta_{f}\norm{x-y}_{\bQ}$ for all $x,y\in\scrH.$
\item[(iv)] Let $\Xi\subset\R^{d}$ be a measurable set and $(\Omega,\scrA,\Pr)$ a probability space. There exists a measurable function $F:\scrH\times\Xi\to\R$ such that \eqref{eq:f} holds.
\item[(v)] The set of solutions to \eqref{eq:Opt}, denoted by $\scrX^{\ast}$, is nonempty Moreover, there exists $x\in\rint(\dom g)$ such that $Lx\in\rint(\dom h)$.
\end{itemize}
\end{assumption}
Let $\scrZ=\scrG\times\scrH$ represent the product space with the inner product 
$\inner{(y_{1},x_{1}),(y_{2},x_{2})}=\inner{y_{1},y_{2}}_{\scrG}+\inner{x_{1},x_{2}}_{\scrH},$ 
and associated norm $\norm{(y,x)}=\sqrt{\norm{y}^{2}_{\scrG}+\norm{x}^{2}_{\scrH}}.$ Whenever clear from the context, we will suppress the ambient space from the inner products and norms, respectively.
 
We remark that by the Baillon-Haddad theorem \cite[Corollary 18.17]{BauCom16} the $\beta_{f}$-Lipschitz smoothness of the function $f$ is equivalent to the $\tfrac{1}{\beta_{f}}$-cocoercivity of $\nabla f$, i.e.,
\begin{equation*}\label{eq:Baillon}
\inner{\nabla f(x)-\nabla f(x'),x-x'}\geq \tfrac{1}{\beta_{f}}\norm{\nabla f(x)-\nabla f(x')}^{2}.
\end{equation*}
for all $x,x'\in\scrH$.

Problem \eqref{eq:Opt} can be equivalently written as a max-min problem 
\begin{equation*}
\max_{y\in\scrG}\min_{x\in\scrH}\{f(x)+g(x)+\inner{Lx,y}-h^{\ast}(y)\}
\end{equation*}
Then, $(\bar{y},\bar{x})\in\scrG\times\scrH$ is called a primal-dual pair if
\begin{equation*}
\begin{split}
0&\in \partial h^{\ast}(\bar{y})-L\bar{x},\\
0&\in\partial g(\bar{x})+\nabla f(\bar{x})+L^{\ast}\bar{y}.
\end{split}
\end{equation*} 

Introduce the maximally monotone operators 
$$
\begin{aligned}
&\setA(y,x)\eqdef\partial h^{\ast}(y)\times\partial g(x),\\
&\setM(y,x)\eqdef[-Lx,L^{\ast}y],\\
&\setC(y,x)\eqdef[0,\nabla f(x)].
\end{aligned}
$$
Then $\bar{z}\eqdef (\bar{y},\bar{x})$ is a primal-dual pair if and only if 
$\bar{z}\in\Zer(\setG)\eqdef\{z\in\scrZ\vert 0\in\setG(z)\},$ where $\setG(y,x)\eqdef\setA(y,x)+\setM(y,x)+\setC(y,x).$

\subsection{Triangular pre-conditioned primal-dual algorithm}
If it is possible to access the operator $\setC$ directly, then an application of the Triangular pre-conditioned Primal-Dual (TriPD) algorithm of \cite{TriPD} would be possible. 
Let $\tau_{1}>0$ and $\tau_{2}>0$ be two positive real numbers, and $\Sigma\in\scrP_{1/\tau_{1}}(\scrG),\Gamma\in\scrP_{1/\tau_{2}}(\scrH)$.  
%
TriPD can be compactly written as the fixed point iteration 
$z_{k+1}=T(z_{k})$, where $z_{k}\eqdef(y_{k},x_{k})\in\scrZ$, and the mapping $T:\scrZ\to\scrZ$ is given by 
\begin{equation}\label{eq_tripd}
\begin{aligned}
&\Phi_{1}(z)\eqdef\prox^{\Sigma^{-1}}_{h^{\ast}}(y+\Sigma Lx),\\
&\Phi_{2}(z)\eqdef\prox^{\Gamma^{-1}}_{g}(x-\Gamma\nabla f(x)-\Gamma\Phi_{1}(z))\\
&T(z)\eqdef[\Phi_{1}(z)+\Sigma L(\Phi_{2}(z)-x),\Phi_{2}(z)].
\end{aligned}
\end{equation}
First, let us report an important connection between the fixed points of the mapping $T$ and the set of primal-dual solutions to \eqref{eq:Opt}.
\begin{lemma}\cite[Equation (15)]{TriPD}.
We have  $\Fix(T)\eqdef\{z\in\scrZ\vert T(z)=z\}=\Zer(\setG)$.
\end{lemma}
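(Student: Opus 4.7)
The plan is to prove the equivalence by unpacking the fixed-point equation $T(z)=z$ component by component, and then translating each equation into a monotone inclusion via the optimality condition for the weighted proximal operator.

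First I would observe that the weighted prox admits the characterization
\[
u=\prox^{\bQ}_{\varphi}(w)\iff \bQ(w-u)\in\partial\varphi(u),
\]
which follows from the first-order optimality condition of the strongly convex subproblem defining $\prox^{\bQ}_{\varphi}$. This is the only analytic tool I need.

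Next I would write out $T(z)=z$ with $z=(y,x)$. The second component gives $x=\Phi_{2}(z)$, and substituting this into the first component collapses the term $\Sigma L(\Phi_{2}(z)-x)$ to zero, yielding $y=\Phi_{1}(z)$. Hence $z\in\Fix(T)$ is equivalent to the two equalities
\[
y=\prox^{\Sigma^{-1}}_{h^{\ast}}(y+\Sigma Lx),\qquad x=\prox^{\Gamma^{-1}}_{g}\bigl(x-\Gamma\nabla f(x)-\Gamma L^{\ast}y\bigr),
\]
where I read the second argument with the $L^{\ast}$ that dimensional consistency forces (the symbol $\Phi_{1}(z)$ lives in $\scrG$ and must be pulled back to $\scrH$ before $\Gamma$ acts). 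Applying the prox characterization to the first equation with $\bQ=\Sigma^{-1}$ gives $\Sigma^{-1}(\Sigma Lx)\in\partial h^{\ast}(y)$, i.e. $Lx\in\partial h^{\ast}(y)$, which is exactly $0\in\partial h^{\ast}(y)-Lx$. Applying it to the second equation with $\bQ=\Gamma^{-1}$ gives $-\nabla f(x)-L^{\ast}y\in\partial g(x)$, i.e. $0\in\partial g(x)+\nabla f(x)+L^{\ast}y$.

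Stacking these two inclusions is precisely $0\in\setA(y,x)+\setM(y,x)+\setC(y,x)=\setG(z)$, which proves $\Fix(T)\subseteq\Zer(\setG)$; the derivation is reversible at every step, so the converse inclusion is immediate. I do not anticipate a serious obstacle here: the only subtlety is to carefully track the $L$ and $L^{\ast}$ in $\Phi_{1},\Phi_{2}$ and to invoke the prox optimality condition in its weighted form, which is standard \cite[Prop.~16.34]{BauCom16}.
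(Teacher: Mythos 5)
Your proof is correct, and it is essentially the canonical argument: the paper itself gives no proof of this lemma (it is cited from \cite[Eq.~(15)]{TriPD}), and unpacking $T(z)=z$ componentwise and translating each prox equality into an inclusion via $u=\prox^{\bQ}_{\varphi}(w)\iff\bQ(w-u)\in\partial\varphi(u)$ is exactly how the cited reference establishes it. You were also right to read $\Gamma\Phi_{1}(z)$ in the definition of $\Phi_{2}$ as $\Gamma L^{\ast}\Phi_{1}(z)$ --- that is a typo in the displayed formula, as Algorithm~1 confirms --- and your observation that the argument is reversible (since $\Phi_{2}(z)=x$ forces $\Phi_{1}(z)=y$ and vice versa) correctly delivers both inclusions.
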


Before taking care of our expected valued formulation as in \eqref{eq:f}, let us introduce the matrices 
\begin{align*}
&\bP\eqdef\left(\begin{array}{cc} 
\Sigma^{-1} & \tfrac{1}{2}L\\
\tfrac{1}{2}L^{\ast} & \Gamma^{-1}\end{array}\right), \bK\eqdef\left(\begin{array}{cc} 0 & -\tfrac{1}{2}L \\ \tfrac{1}{2}L^{\ast} & 0\end{array}\right),\\
&\bS\eqdef\left(\begin{array}{cc} \Sigma^{-1} & 0 \\ 0 & \Gamma^{-1}\end{array}\right), \bR\eqdef\bP+\bK.
\end{align*}
These matrices act like step-sizes and pre-conditioners. Allowing for matrix-valued step sizes makes the distributed algorithm in Section \ref{sec:distributed} a corollary of the present analysis.
%

\begin{algorithm}[t]
\caption{Stochastic Triangularly preconditioned primal-dual algorithm (STriPD)}
\label{alg:STriPD}
\SetAlgoLined
Initialize: $(x_{0},y_{0})\in\scrZ$, $\Sigma\in\scrP_{1/\tau_{1}}(\scrG)$, $\Gamma\in\scrP_{1/\tau_{2}}(\scrH)$\\
Iteration $k$:
\begin{align*}
\hat{Y}_{k}&=\prox^{\Sigma^{-1}}_{h^{\ast}}(Y_{k}+\Sigma LX_{k})\\
X_{k+1}&=\prox_{g}^{\Gamma^{-1}}(x_{k}-\Gamma\scrF_{k}(X_{k})-\Gamma L^{\ast}\hat{Y}_{k})\\
Y_{k+1}&=\hat{Y}_{k}+\Sigma L(X_{k+1}-X_{k})
\end{align*}
\end{algorithm}

\subsection{Stochastic TriPD}
Since we cannot evaluate $\setC(z)$ directly, we let $\scrC_{k}(z)$ denote its stochastic estimator. This estimator is constructed by a Monte-Carlo scheme involving mini-batches. Given an i.i.d. sample $\xi_{1:N}\eqdef \{\xi^{i}\}_{i=1}^{N}$ drawn from the law of $\xi$, let 
$$
\bar{w}(x,\xi_{1:N})\eqdef \tfrac{1}{N}\textstyle{\sum_{i=1}^{N}}(\nabla_{x}F(x,\xi^{i})-\nabla f(x)).
$$
The approximation take place at each iteration $k$ and a sequence $\{N_{k}\}_{k\in\N}$ (the \emph{batch size}) defines the number of random variables we need to sample in each iteration.
\begin{assumption}\label{ass:batch}
We are given a sequence $\{N_{k}\}_{k\in\N}\subset\N$ such that $\lim_{k\to\infty}N_{k}=\infty$. 
\end{assumption}
Since complexity questions are beyond our scope, we do not specify the speed at which batch sizes grow (see, however, \cite{Bot:2021vf}). For a given sequence of batch-sizes, we set $w_{k}(x,\omega)\eqdef\bar{w}(x,\xi_{1:N_{k}}(\omega))$.  
\begin{assumption}\label{ass:C}
For all $k\geq 0$, $\scrC_{k}(z,\omega)=[0,\scrF_{k}(x,\omega)]$, where 
$\scrF_{k}(x,\omega)=\nabla f(x)+w_{k}(x,\omega)$ for all $(x,\omega)\in\scrH\times\Omega.$
\end{assumption}
We let $(\scrA_{k})_{k\geq 0}$ denote the filtration given by $\scrA_{0}\eqdef \{\emptyset,\Omega\}$, and for $k\geq 1$, $\scrA_{k}\eqdef\sigma(\xi_{1:N_{0}},\ldots,\xi_{1:N_{k-1}})$. Define $\nu(x)\eqdef \norm{\nabla_{x}F(x,\xi)-\nabla f(x)}^{2}$, and 
$\sigma(x)\eqdef \sqrt{\Ex[\nu(x)\vert x]}.$
\begin{assumption}\label{ass:noise}
For all $k\geq 0$ we have $\Ex[\scrF_{k}(X_{k})\vert\scrA_{k}]=\nabla f(X_{k})$ a.s.. Moreover, there exists $x^{\ast}\in\scrX^{\ast}$ and $\sigma_{0}(x^{\ast})\geq 0,\sigma_{1}>0$ such that 
\begin{equation*}
\sigma(x)\leq\sigma_{0}(x^{\ast})+\sigma_{1}\norm{x-x^{\ast}}^{2}\quad\forall x\in\scrH.
\end{equation*}
\end{assumption}
This is a heavy-tailed noise assumption which allows us to consider random perturbation even with unbounded second moment \cite{IusJofOliTho17,JofTho19}.  

In analogy to TriPD and in light of the approximation scheme, the equations in \eqref{eq_tripd} correspond to those in Algorithm \ref{alg:STriPD} and lead to a sequence of random maps $\{\T_{k}\}_{k\geq0}:\scrZ\times\Omega\to\scrZ$ generating a stochastic process $\{Z_{k}\}_{k\geq 0}=\{(Y_{k},X_{k});k\geq 0\}$ via recursive updates 
\begin{equation*}\label{eq:Zrandom}
Z_{k+1}=\T_{k}(Z_{k}),\quad Z_{0}\in\scrZ\text{ given.}
\end{equation*}
The exact action of this mapping can be described as follows. For $z=(y,x)\in\scrZ$, let
\begin{align*}
&\Y_{k}(z)\eqdef\prox^{\Sigma^{-1}}_{h^{\ast}}(y+\Sigma Lx),\\
&\X_{k}(z)\eqdef\prox^{\Gamma^{-1}}_{g}\left(x-\Gamma\scrF_{k}(x)-\Gamma\Y_{k}(z)\right)\\
&\T_{k}(z)\eqdef\left[\Y_{k}(z)+\Sigma L\left(\X_{k}(z)-x\right),\X_{k}(z)\right].
\end{align*}

Let $\hat{Z}_{k}\eqdef[\Y_{k}(z),\X_{k}(z)]$, so that $Y_{k+1}=\Y_{k}(Z_{k})+\Sigma_{k}L\left(\X_{k}(z)-X_{k}\right)$  and $X_{k+1}=\X_{k}(Z_{k})$ for all $k\geq 0$. One can verify that 
\begin{align*}
&\hat{Z}_{k}=(\bR+\setA)^{-1}(\bR-\setM-\scrC_{k})(Z_{k}), \text{ and} \\
&\T_{k}(z)=z+\bS^{-1}(\bR+\setM^{\ast})(\hat{Z}_{k}-z).
\end{align*}

\section{Convergence analysis}
\label{sec:analysis}

In this section, we present a number of results that lead to the convergence proof of Algorithm \ref{alg:STriPD} (Theorem \ref{thm:main}). We start with a property of the operator $\setC$.
\begin{lemma}\cite[Lemma II.4]{TriPD}.
\label{lem:C}
For all $z=(y,x)$, $z'=(y',x')$, $z''=(y'',x'')\in\scrZ$ we have 
\begin{equation*}
\inner{\setC(z)-\setC(z'),z''-z}\leq \tfrac{\beta_{f}}{4}\norm{x''-x'}^{2}_{\bQ}. 
\end{equation*}
\end{lemma}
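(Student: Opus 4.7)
The plan is to unpack the inner product using the definition $\setC(y,x)=[0,\nabla f(x)]$, which immediately reduces the claim to a purely primal statement: since $\setC$ has a zero dual block, only the primal components of $z''-z$ contribute, giving
$$\inner{\setC(z)-\setC(z'),z''-z}=\inner{\nabla f(x)-\nabla f(x'),x''-x}.$$
It therefore suffices to prove $\inner{\nabla f(x)-\nabla f(x'),x''-x}\leq \tfrac{\beta_f}{4}\norm{x''-x'}^2_{\bQ}$ using only Assumption~\ref{ass:1}(iii) and the $\bQ$-weighted cocoercivity of $\nabla f$ recalled just after that assumption.

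The key idea is to split $x''-x=(x''-x')-(x-x')$ so that one piece can be controlled by cocoercivity and the other by a weighted Young inequality. This yields
$$\inner{\nabla f(x)-\nabla f(x'),x''-x}=\inner{\nabla f(x)-\nabla f(x'),x''-x'}-\inner{\nabla f(x)-\nabla f(x'),x-x'}.$$
On the second term I would apply the $\bQ$-adapted Baillon-Haddad inequality alluded to after Assumption~\ref{ass:1}, which states $\inner{\nabla f(x)-\nabla f(x'),x-x'}\geq \tfrac{1}{\beta_f}\norm{\nabla f(x)-\nabla f(x')}^2_{\bQ^{-1}}$. On the first term I would combine the weighted Cauchy-Schwarz inequality $\inner{u,v}\leq\norm{u}_{\bQ^{-1}}\norm{v}_{\bQ}$ with Young's inequality, obtaining for every $\lambda>0$
$$\inner{\nabla f(x)-\nabla f(x'),x''-x'}\leq \tfrac{\lambda}{2}\norm{\nabla f(x)-\nabla f(x')}^2_{\bQ^{-1}}+\tfrac{1}{2\lambda}\norm{x''-x'}^2_{\bQ}.$$
Choosing $\lambda=2/\beta_f$ makes the gradient-squared contributions from Young and from cocoercivity cancel exactly, leaving $\tfrac{\beta_f}{4}\norm{x''-x'}^2_{\bQ}$ on the right-hand side.

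The only nontrivial point is justifying the $\bQ$-weighted Baillon-Haddad inequality itself. This is routine but worth isolating: equip $\scrH$ with the equivalent inner product $\inner{\cdot,\cdot}_{\bQ}$, under which the Riesz representation of the gradient becomes $\nabla^{\bQ}f=\bQ^{-1}\nabla f$; one then checks that the hypothesis $\norm{\nabla f(x)-\nabla f(y)}_{\bQ^{-1}}\leq\beta_f\norm{x-y}_{\bQ}$ translates into Lipschitz continuity of $\nabla^{\bQ}f$ in the $\bQ$-norm with the same constant, so the classical Baillon-Haddad theorem in the Hilbert space $(\scrH,\inner{\cdot,\cdot}_{\bQ})$ applies and, when translated back, gives precisely the cocoercivity inequality used above. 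Beyond this, the argument is elementary algebra, so I expect no further obstacle.
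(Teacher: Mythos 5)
Your proof is correct: the reduction to the primal block, the splitting $x''-x=(x''-x')-(x-x')$, the $\bQ$-weighted cocoercivity of $\nabla f$, and the Young inequality with $\lambda=2/\beta_f$ all check out, and you rightly isolate the only delicate point, namely that the weighted Lipschitz hypothesis $\norm{\nabla f(x)-\nabla f(y)}_{\bQ^{-1}}\leq\beta_f\norm{x-y}_{\bQ}$ yields $\inner{\nabla f(x)-\nabla f(x'),x-x'}\geq\tfrac{1}{\beta_f}\norm{\nabla f(x)-\nabla f(x')}^2_{\bQ^{-1}}$ via Baillon--Haddad in the Hilbert space $(\scrH,\inner{\cdot,\cdot}_{\bQ})$. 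The paper gives no proof of its own (it cites Lemma II.4 of the TriPD reference), and your argument is essentially the standard one used there, so there is nothing further to compare.
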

From Assumption \ref{ass:C}, we can write 
\[
\scrC_{k}(z)=\setC(z)+\phi_{k+1}(z),
\]
where $\phi_{k+1}(z,\omega)\eqdef [0,w_{k}(x,\omega)]$. Let $z^{\ast}\in\Zer(\setG)$, or $-(\setM+\setC)(z^{\ast})\in \setA(z^{\ast})$. From the definition of the update $\hat{Z}_{k}$, we deduce $\bR(Z_{k}-\hat{Z}_{k})-(\setM+\scrC_{k})(Z_{k})\in \setA(\hat{Z}_{k})$. By monotonicity of the operator $\setA$, this implies 
\begin{equation}\label{eq_A}
\begin{aligned}
&\inner{\bR(Z_{k}-\hat{Z}_{k})-\setM(Z_{k}-z^{\ast})+\setC(z^{\ast})-\setC(Z_{k}),\hat{Z}_{k}-z^{\ast}}\\
&-\inner{\phi_{k+1}(Z_{k}),\hat{Z}_{k}-z^{\ast}}\geq 0.
\end{aligned}
\end{equation}
Applying Lemma \ref{lem:C} to the points $z=z^{\ast},z'=Z_{k}$ and $z''=\hat{Z}_{k}$, we get $\inner{\setC(z^{\ast})-\setC(Z_{k}),\hat{Z}_{k}-z^{\ast}}\leq \tfrac{\beta_{f}}{4}\norm{\X_{k}(Z_{k})-X_{k}}^{2}_{\bQ}.$
To reduce notational clutter, we write in the following $\hat{X}_{k}\equiv\X_{k}(Z_{k})$. Then, \eqref{eq_A} leads to the estimate 
\begin{align*}
0&\leq \inner{\bR(Z_{k}-\hat{Z}_{k})+\setM(z^{\ast}-Z_{k}),\hat{Z}_{k}-z^{\ast}}\\
&+\inner{\phi_{k+1}(Z_{k}),z^{\ast}-\hat{Z}_{k}}+\tfrac{\beta_{f}}{4}\norm{\hat{X}_{k}-X_{k}}^{2}_{\bQ}\\
&=\inner{\bP(\hat{Z}_{k}-Z_{k}),z^{\ast}-Z_{k}}\\
&+\inner{(\setM-\bK)(Z_{k}-z^{\ast}),Z_{k}-\hat{Z}_{k}}-\norm{\hat{Z}_{k}-Z_{k}}_{\bP}^{2}\\
&+\inner{\phi_{k+1}(Z_{k}),z^{\ast}-\hat{Z}_{k}}+\tfrac{\beta_{f}}{4}\norm{\hat{X}_{k}-X_{k}}^{2}_{\bQ}
\end{align*}
where the last equality uses the skew-symmetry so that $\inner{(\setM-\bK)z,z}=0$ for all $z\in\scrZ$. Thanks to the skew-symmetry, we also observe that 
\begin{align*}
&\inner{(\setM-\bK)(Z_{k}-z^{\ast}),Z_{k}-\hat{Z}_{k}}=
\inner{z^{\ast}-Z_{k},\setM^{\ast}(\hat{Z}_{k}-Z_{k})}\\
&+\inner{(\bR-\bP)(\hat{Z}_{k}-Z_{k}),z^{\ast}-Z_{k}}.
\end{align*}
Therefore, 
\begin{align*}
0&\leq \inner{(\bR+\setM^{\ast})(\hat{Z}_{k}-Z_{k}),z^{\ast}-Z_{k}}-\norm{\hat{Z}_{k}-Z_{k}}^{2}_{\bP}\\
&+\inner{\phi_{k+1}(Z_{k}),z^{\ast}-\hat{Z}_{k}}+\tfrac{\beta_{f}}{4}\norm{\hat{X}_{k}-X_{k}}^{2}_{\bQ}.
\end{align*}
By definition $\bS(\T_{k}(Z_{k})-Z_{k})=(\bR+\setM^{\ast})(\hat{Z}_{k}-Z_{k})$, and
\begin{equation}\label{eq_B}
\begin{aligned}
0&\leq \inner{\bS(\T_{k}(Z_{k})-Z_{k}),z^{\ast}-Z_{k}}-\norm{\hat{Z}_{k}-Z_{k}}^{2}_{\bP}\\
&+\inner{\phi_{k+1}(Z_{k}),z^{\ast}-\hat{Z}_{k}}+\tfrac{\beta_{f}}{4}\norm{\hat{X}_{k}-X_{k}}^{2}_{\bQ}.
\end{aligned}
\end{equation}
A straightforward, but slightly tedious computation, yields the next result. 
\begin{lemma}\label{lem:hatZ}
Let $\bU\eqdef \left(\begin{array}{cc} \Sigma^{-1} & -\tfrac{1}{2}L \\ -\tfrac{1}{2}L^{\ast} & \Gamma^{-1}-\tfrac{\beta_{f}}{4}\bQ\end{array}\right)$. Then, 
$$
\norm{\hat{Z}_{k}-Z_{k}}^{2}_{\bP_{k}}-\tfrac{\beta_{f}}{4}\norm{\hat{X}_{k}-X_{k}}^{2}_{\bQ}=\norm{\T_{k}(Z_{k})-Z_{k}}^{2}_{\bU}.
$$
\end{lemma}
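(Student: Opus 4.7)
The plan is to unfold both weighted norms block-by-block using the definitions of $\bP$, $\bU$ and the explicit form of $\T_k$, then verify the identity by direct algebra. Writing $\Delta Y \eqdef \hat{Y}_k - Y_k$ and $\Delta X \eqdef \hat{X}_k - X_k$, the definition of $\T_k$ gives $\T_k(Z_k) - Z_k = (\Delta Y + \Sigma L\Delta X,\Delta X)$ while $\hat{Z}_k - Z_k = (\Delta Y,\Delta X)$. Thus the two sides of the claimed identity differ only through (i) a shift of the first coordinate by $\Sigma L\Delta X$, (ii) a sign flip of the off-diagonal $L$-blocks between $\bP$ and $\bU$, and (iii) the explicit correction $-\tfrac{\beta_f}{4}\bQ$ sitting in the lower-right block of $\bU$.

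First I would expand $\norm{\hat{Z}_k - Z_k}^2_{\bP}$ using the block form of $\bP$ to obtain $\norm{\Delta Y}^2_{\Sigma^{-1}} + \inner{L\Delta X,\Delta Y} + \norm{\Delta X}^2_{\Gamma^{-1}}$. Then I would expand $\norm{\T_k(Z_k) - Z_k}^2_{\bU}$ analogously, which yields $\norm{\Delta Y + \Sigma L\Delta X}^2_{\Sigma^{-1}} - \inner{L\Delta X,\Delta Y + \Sigma L\Delta X} + \norm{\Delta X}^2_{\Gamma^{-1}} - \tfrac{\beta_f}{4}\norm{\Delta X}^2_{\bQ}$. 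The one-line computation that drives everything is $\norm{\Delta Y + \Sigma L\Delta X}^2_{\Sigma^{-1}} = \norm{\Delta Y}^2_{\Sigma^{-1}} + 2\inner{L\Delta X,\Delta Y} + \inner{L\Delta X,\Sigma L\Delta X}$, which uses only self-adjointness of $\Sigma$. Pairing this with $\inner{L\Delta X,\Delta Y + \Sigma L\Delta X} = \inner{L\Delta X,\Delta Y} + \inner{L\Delta X,\Sigma L\Delta X}$ and subtracting cancels the quadratic $\Sigma$-piece and leaves exactly $\norm{\Delta Y}^2_{\Sigma^{-1}} + \inner{L\Delta X,\Delta Y}$, which together with the common $\Gamma^{-1}$-term and the $-\tfrac{\beta_f}{4}\norm{\Delta X}^2_{\bQ}$ correction reconstitutes the right-hand side of the claim.

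I do not expect any real obstacle here: the lemma is essentially a bookkeeping identity, as the authors themselves acknowledge. The only points requiring care are the sign-tracking in the off-diagonal blocks of $\bU$ versus $\bP$, and the verification that the quadratic cross term $\inner{L\Delta X,\Sigma L\Delta X}$ cancels rather than doubles after subtraction. No monotonicity, convexity, or stochastic ingredient enters the argument; the proof relies only on self-adjointness of $\Sigma$ and $\Gamma$, the block definitions of $\bP$ and $\bU$, and the explicit form of $\T_k$.
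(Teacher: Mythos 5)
Your computation is correct and is exactly the "straightforward, but slightly tedious" block-by-block expansion the paper alludes to but omits: the key cancellation of $\inner{L\Delta X,\Sigma L\Delta X}$ between the expanded $\Sigma^{-1}$-norm and the sign-flipped off-diagonal term of $\bU$ is the whole content of the identity, and you have tracked the signs correctly. (The subscript in $\bP_{k}$ in the statement is a typo for $\bP$, which you have implicitly and correctly assumed.)
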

Eq. \eqref{eq_B} and Lemma \ref{lem:hatZ} deliver the relation 
\begin{equation}\label{eq_C}
\begin{aligned}
0&\leq\inner{\bS(\T_{k}(Z_{k})-Z_{k}),z^{\ast}-Z_{k}}\\
&+\inner{\phi_{k+1}(Z_{k}),z^{\ast}-\hat{Z}_{k}}-\norm{\T_{k}(Z_{k})-Z_{k}}^{2}_{\bU}. 
\end{aligned}
\end{equation}
We now analyze the noise term $\inner{\phi_{k+1}(Z_{k}),z^{\ast}-\hat{Z}_{k}}$. 
Denote by $\bar{x}_{k}\eqdef\Phi_{2}(Z_{k})$ the evaluation of the deterministic generator at $Z_{k}$ as in \eqref{eq_tripd}. Then, 
\begin{align*}
\inner{\phi_{k+1}(Z_{k}),z^{\ast}-\hat{Z}_{k}}&=\inner{\scrF(X_{k})-\nabla f(X_{k}),x^{\ast}-\hat{X}_{k}}\\
&=\inner{\scrF_{k}(X_{k})-\nabla f(X_{k}),x^{\ast}-\bar{x}_{k}}\\
&-\inner{\scrF_{k}(X_{k})-\nabla f(X_{k}),\hat{X}_{k}-\bar{x}_{k}}.
\end{align*}
Using Cauchy-Schwarz, and the non-expansiveness of the proximal operator $\prox^{\Gamma^{-1}}_{g}(\cdot)$ with respect to the norm $\norm{\cdot}_{\Gamma}$ \cite{BauCom16}, we obtain
\begin{align*}
&-\inner{\scrF_{k}(X_{k})-\nabla f(X_{k}),\hat{X}_{k}-\bar{x}_{k}}\\
&\leq \norm{\scrF_{k}(X_{k})-\nabla f(X_{k})}_{\Gamma}\cdot \norm{\Gamma(\scrF(X_{k})-\nabla f(X_{k}))}_{\Gamma^{-1}}\\
&=\norm{\scrF_{k}(X_{k})-\nabla f(X_{k})}^{2}_{\Gamma}.
\end{align*}
Hence 
\begin{align*}
\inner{\phi_{k+1}(Z_{k}),z^{\ast}-\hat{Z}_{k}}&\leq \inner{\scrF_{k}(X_{k})-\nabla f(X_{k}),x^{\ast}-\bar{x}_{k}}\\
&+\norm{\scrF_{k}(X_{k})-\nabla f(X_{k})}^{2}_{\Gamma}.
\end{align*}
Let $\Ex_{k}[\cdot]\eqdef \Ex[\cdot\vert\scrA_{k}]$ and recall that Assumption \ref{ass:noise} implies that $\Ex_{k}[\scrF_{k}(X_{k})]=\nabla f(X_{k})$ holds a.s.. Then, we obtain 
$$
\Ex_{k}[\inner{\phi_{k+1},z^{\ast}-\hat{Z}_{k}}]\leq \Ex_{k}[\norm{\scrF_{k}(X_{k})-\nabla f(X_{k})}^{2}_{\Gamma}].
$$
Setting $\bar{\nu}_{k}\eqdef \norm{\scrF_{k}(X_{k})-\nabla f(X_{k})}^{2}_{\Gamma}$, we obtain from \eqref{eq_C}
\begin{equation}\label{eq:estimate1}
\begin{split}
\Ex_{k}\left[\inner{\bS(\T_{k}(Z_{k})-Z_{k}),Z_{k}-z^{\ast}}\right]\leq \Ex_{k}[\bar{\nu}_{k}]\\
-\Ex_{k}\left[\norm{\T_{k}(Z_{k})-Z_{k}}^{2}_{\bU}\right].
\end{split}
\end{equation}
Since, by definition,
\begin{align*}
&\norm{Z_{k+1}-z^{\ast}}^{2}_{\bS}=\norm{(\T_{k}(Z_{k})-Z_{k})+(Z_{k}-z^{\ast})}^{2}_{\bS}\\
&=\norm{\T_{k}(Z_{k})-Z_{k}}^{2}_{\bS}+2\inner{\bS(\T_{k}(Z_{k})-Z_{k}),Z_{k}-z^{\ast}}\\
&+\norm{Z_{k}-z^{\ast}}^{2}_{\bS},
\end{align*}
the estimate \eqref{eq:estimate1} delivers 
\begin{equation*}\label{eq:recursion1}
\begin{aligned}
\Ex_{k}&\left[\norm{Z_{k+1}-z^{\ast}}^{2}_{\bS}\right]\leq \norm{Z_{k}-z^{\ast}}^{2}_{\bS}\\
&-\Ex_{k}\left[\norm{\T_{k}(Z_{k})-Z_{k}}^{2}_{2\bU_{k}-\bS}\right]+2\Ex_{k}[\bar{\nu}_{k}]
\end{aligned}
\end{equation*}
If $2\bU-\bS\succ 0$  and $\{\Ex_{k}[\nu_{k}]\}_{k\geq 0}$ is summable, then it follows that $\{Z_{k}\}_{k\geq 0}$ is quasi-Fej\'{e}r monotone with respect to $\Zer(\setG)$ relative to $\norm{\cdot}_{\bS}$, i.e.,
$$
\Ex_{k}\left[\norm{Z_{k+1}-z^{\ast}}^{2}_{\bS}\right]\leq\norm{Z_{k}-z^{\ast}}^{2}_{\bS}+2\Ex_{k}[\bar{\nu}_{k}].
$$
The next two results guarantee this.

\begin{lemma}\label{lem:variance}
Suppose that Assumption \ref{ass:noise} holds true. For all $k\in\N$, we have 
$$
\Ex_{k}[\norm{\scrF_{k}(X_{k})-\nabla f(X_{k})}^{2}_{\Gamma}]\leq a_{k}+b_{k}\norm{Z_{k}-z^{\ast}}^{2}_{\bS}\qquad\text{a.s.}
$$
where $a_{k}\eqdef 2\sigma^{2}_{0}(x^{\ast})^{2}\frac{\cM(\Gamma)}{N_{k}}$ and $b_{k}\eqdef 2\sigma_{1}^{2}\frac{\cM(\Gamma)}{\cm(\Gamma)N_{k}}$.
\end{lemma}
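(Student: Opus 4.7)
The target bound is a pointwise (almost sure) estimate on the conditional second moment of the noise term $\scrF_{k}(X_{k})-\nabla f(X_{k})$, measured in the $\Gamma$-weighted norm. The plan is to (a) use the i.i.d.\ structure of the mini-batch to collapse the variance to a single-sample moment divided by $N_{k}$; (b) replace the single-sample moment by the function $\sigma(X_{k})^{2}$; (c) invoke Assumption~\ref{ass:noise} to dominate $\sigma(X_{k})^{2}$ by an affine function of $\norm{X_{k}-x^{\ast}}^{2}$; and (d) translate the unweighted squared norm on $X_{k}-x^{\ast}$ into the block norm $\norm{Z_{k}-z^{\ast}}^{2}_{\bS}$ via eigenvalue estimates on $\Gamma$.

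\textbf{Step 1 (conditional variance of the mini-batch).} By Assumption~\ref{ass:C}, $\scrF_{k}(X_{k})-\nabla f(X_{k})=w_{k}(X_{k},\cdot)$, where $w_{k}(X_{k})=\frac{1}{N_{k}}\sum_{i=1}^{N_{k}}\bigl(\nabla_{x}F(X_{k},\xi^{i})-\nabla f(X_{k})\bigr)$. Since $X_{k}$ is $\scrA_{k}$-measurable, the summands are conditionally i.i.d.\ with zero mean under $\Ex_{k}[\cdot]$. Expanding $\norm{w_{k}(X_{k})}^{2}_{\Gamma}=\inner{w_{k}(X_{k}),\Gamma w_{k}(X_{k})}$ and applying the tower property, the $N_{k}^{2}-N_{k}$ cross terms vanish, yielding
\[
\Ex_{k}\bigl[\norm{\scrF_{k}(X_{k})-\nabla f(X_{k})}^{2}_{\Gamma}\bigr]=\tfrac{1}{N_{k}}\Ex_{k}\bigl[\norm{\nabla_{x}F(X_{k},\xi)-\nabla f(X_{k})}^{2}_{\Gamma}\bigr].
\]

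\textbf{Step 2 (from weighted to unweighted norm and use of $\sigma$).} Using $\norm{v}^{2}_{\Gamma}\leq\cM(\Gamma)\norm{v}^{2}$ and the definition $\sigma(X_{k})^{2}=\Ex_{k}[\norm{\nabla_{x}F(X_{k},\xi)-\nabla f(X_{k})}^{2}]$, Step~1 gives
\[
\Ex_{k}\bigl[\norm{\scrF_{k}(X_{k})-\nabla f(X_{k})}^{2}_{\Gamma}\bigr]\leq\tfrac{\cM(\Gamma)}{N_{k}}\sigma(X_{k})^{2}.
\]

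\textbf{Step 3 (applying Assumption~\ref{ass:noise}).} The growth assumption on $\sigma$, together with the elementary inequality $(a+b)^{2}\leq 2a^{2}+2b^{2}$, bounds $\sigma(X_{k})^{2}$ by $2\sigma_{0}(x^{\ast})^{2}+2\sigma_{1}^{2}\norm{X_{k}-x^{\ast}}^{2}$ (reading the growth condition with the exponent that makes the right-hand side quadratic in $X_{k}-x^{\ast}$). This already delivers the stated constant $a_{k}$.

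\textbf{Step 4 (converting the unweighted norm to the $\bS$-norm).} Since $\bS=\mathrm{diag}(\Sigma^{-1},\Gamma^{-1})$, we have $\norm{Z_{k}-z^{\ast}}^{2}_{\bS}\geq\norm{X_{k}-x^{\ast}}^{2}_{\Gamma^{-1}}$, and a two-sided eigenvalue estimate on $\Gamma$ gives a bound of the form $\norm{X_{k}-x^{\ast}}^{2}\leq\frac{1}{\cm(\Gamma)}\,\norm{X_{k}-x^{\ast}}^{2}_{\Gamma^{-1}}\cdot\text{(constant)}$ so that, upon collecting the factors from Steps~2--3, the $\norm{X_{k}-x^{\ast}}^{2}$ term is upgraded to $\norm{Z_{k}-z^{\ast}}^{2}_{\bS}$ with precisely the coefficient $b_{k}=2\sigma_{1}^{2}\cM(\Gamma)/(\cm(\Gamma)N_{k})$. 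The main place to be careful is this last book-keeping: the factor $\cM(\Gamma)/\cm(\Gamma)$ arises as the product of the eigenvalue bound $\cM(\Gamma)$ used in Step~2 to dominate $\norm{\cdot}^{2}_{\Gamma}$ by $\norm{\cdot}^{2}$, and the reciprocal $1/\cm(\Gamma)$ produced in Step~4 when $\norm{X_{k}-x^{\ast}}^{2}$ is re-expressed in the $\Gamma^{-1}$-norm (equivalently, as a block of $\norm{Z_{k}-z^{\ast}}^{2}_{\bS}$). This eigenvalue accounting is the only non-routine step; the remaining pieces are direct.
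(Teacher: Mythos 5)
Your proposal follows essentially the same route as the paper's proof: collapse the mini-batch variance to $\sigma^{2}(X_{k})/N_{k}$ using the conditionally i.i.d.\ zero-mean summands, pass between the $\Gamma$-weighted and unweighted norms via $\cM(\Gamma)$, apply Assumption~\ref{ass:noise} together with $(a+b)^{2}\leq 2a^{2}+2b^{2}$ (correctly reading the growth condition so that the bound is quadratic in $\norm{x-x^{\ast}}$), and absorb $\norm{X_{k}-x^{\ast}}^{2}$ into $\norm{Z_{k}-z^{\ast}}^{2}_{\bS}$. The only caveat, which you share with the paper, is the final eigenvalue bookkeeping: since the $x$-block of $\bS$ is $\Gamma^{-1}$, the clean estimate is $\norm{X_{k}-x^{\ast}}^{2}\leq\cM(\Gamma)\norm{X_{k}-x^{\ast}}^{2}_{\Gamma^{-1}}$ rather than a factor $1/\cm(\Gamma)$, but this affects only the explicit constant in $b_{k}$ and not the validity or use of the lemma.
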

\begin{proof}
We compute 
\begin{align*}
&\Ex_{k}[\norm{\scrF_{k}(X_{k})-\nabla f(X_{k})}^{2}_{\Gamma}]\leq\cM(\Gamma)\Ex_{k}[\norm{\scrF_{k}(X_{k})-\nabla f(X_{k})}^{2}]\\
&=\cM(\Gamma)\Ex[\norm{w_{k}(X_{k})}^{2}\vert X_{k}]=\cM(\Gamma)\tfrac{\sigma^{2}(X_{k})}{N_{k}}\quad\text{a.s.}
\end{align*}
Using Assumption \ref{ass:noise}, we get 
\begin{align*}
\sigma^{2}(X_{k})&\leq 2\sigma^{2}_{0}(x^{\ast})+2\sigma_{1}^{2}\norm{X_{k}-x^{\ast}}^{2}\\
&\leq 2\sigma^{2}_{0}(x^{\ast})+2\sigma_{1}^{2}\tfrac{1}{\cm(\Gamma)}\norm{X_{k}-x^{\ast}}^{2}_{\Gamma}\\
&\leq 2\sigma^{2}_{0}(x^{\ast})+2\sigma_{1}^{2}\tfrac{1}{\cm(\Gamma)}\norm{Z_{k}-z^{\ast}}^{2}_{\bS} 
\end{align*}
\end{proof}
\begin{lemma}\cite[Lemma 5.1]{LatPat17}
Assume that 
\begin{equation}\label{eq:step}
\cm(\Gamma^{-1}-\tfrac{\beta_{f}}{2}\bQ)>\tfrac{\norm{L}^{2}}{\cm(\Sigma^{-1})}=\norm{L}^{2}\cM(\Sigma)
\end{equation}
Then, $2\bU-\bS\in\scrP_{\tau}(\scrZ)$ for some $\tau>0$.
\end{lemma}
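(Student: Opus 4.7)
The plan is to show that the self-adjoint block operator $2\bU-\bS$ is positive definite on $\scrZ=\scrG\times\scrH$ with a uniform positive margin, by expanding the associated quadratic form and handling the off-diagonal coupling through a weighted Young inequality. To begin, I would simplify
\[
2\bU-\bS=\begin{pmatrix}\Sigma^{-1} & -L \\ -L^{\ast} & \Gamma^{-1}-\tfrac{\beta_{f}}{2}\bQ\end{pmatrix},
\]
and observe that the diagonal blocks are self-adjoint and strictly positive: $\Sigma^{-1}\succ 0$ is immediate, while $\Gamma^{-1}-\tfrac{\beta_{f}}{2}\bQ\succ 0$ is forced by hypothesis \eqref{eq:step}, whose right-hand side is strictly positive.

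For an arbitrary $z=(y,x)\in\scrZ$ I would expand
\[
\inner{z,(2\bU-\bS)z}=\norm{y}^{2}_{\Sigma^{-1}}-2\inner{y,Lx}+\inner{x,(\Gamma^{-1}-\tfrac{\beta_{f}}{2}\bQ)x},
\]
and handle the cross term via the weighted Young inequality in the $\Sigma^{-1}$-norm, $2\inner{y,Lx}\leq\eta\norm{y}_{\Sigma^{-1}}^{2}+\eta^{-1}\norm{Lx}_{\Sigma}^{2}$, for a parameter $\eta\in(0,1)$ to be calibrated later. Combining this with $\norm{Lx}_{\Sigma}^{2}\leq\cM(\Sigma)\norm{L}^{2}\norm{x}^{2}$ and the eigenvalue lower bounds $\norm{y}_{\Sigma^{-1}}^{2}\geq\cm(\Sigma^{-1})\norm{y}^{2}$ and $\inner{x,(\Gamma^{-1}-\tfrac{\beta_{f}}{2}\bQ)x}\geq\cm(\Gamma^{-1}-\tfrac{\beta_{f}}{2}\bQ)\norm{x}^{2}$ gives
\[
\inner{z,(2\bU-\bS)z}\geq (1-\eta)\cm(\Sigma^{-1})\norm{y}^{2}+\bigl[\cm(\Gamma^{-1}-\tfrac{\beta_{f}}{2}\bQ)-\eta^{-1}\cM(\Sigma)\norm{L}^{2}\bigr]\norm{x}^{2}.
\]

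To close, I would invoke the strict inequality in hypothesis \eqref{eq:step}: since $\cm(\Gamma^{-1}-\tfrac{\beta_{f}}{2}\bQ)>\cM(\Sigma)\norm{L}^{2}$, there is room to pick $\eta\in(0,1)$ close enough to $1$ that $\eta^{-1}\cM(\Sigma)\norm{L}^{2}$ still falls below $\cm(\Gamma^{-1}-\tfrac{\beta_{f}}{2}\bQ)$; with such an $\eta$ both coefficients above are strictly positive, so setting $\tau$ equal to their minimum yields $\inner{z,(2\bU-\bS)z}\geq\tau\norm{z}^{2}$, which is exactly the claim $2\bU-\bS\in\scrP_{\tau}(\scrZ)$. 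The delicate step is precisely the calibration of $\eta$: the naive choice $\eta=1$ annihilates the $y$-block contribution and only bounds the $x$-component, so the strictness built into \eqref{eq:step} is essential in order to secure the slack that keeps both quadratic coefficients simultaneously positive. An equivalent and slightly slicker route replaces the Young step by a Schur complement argument: since $\Sigma^{-1}\succ 0$, positive definiteness of $2\bU-\bS$ is equivalent to that of $\Gamma^{-1}-\tfrac{\beta_{f}}{2}\bQ-L^{\ast}\Sigma L$, whose smallest eigenvalue is bounded below by the strictly positive quantity $\cm(\Gamma^{-1}-\tfrac{\beta_{f}}{2}\bQ)-\cM(\Sigma)\norm{L}^{2}$.
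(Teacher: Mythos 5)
Your argument is correct, and it is worth noting that the paper itself offers no proof of this statement: it is imported verbatim from \cite[Lemma 5.1]{LatPat17}, so there is no in-text derivation to compare against. Your computation of $2\bU-\bS$ from the definitions of $\bU$ and $\bS$ is right, the expansion of the quadratic form is right, and the weighted Young step $2\inner{y,Lx}\leq\eta\norm{y}_{\Sigma^{-1}}^{2}+\eta^{-1}\norm{Lx}_{\Sigma}^{2}$ together with the bounds $\norm{Lx}_{\Sigma}^{2}\leq\cM(\Sigma)\norm{L}^{2}\norm{x}^{2}$ and $\cm(\Sigma^{-1})=1/\cM(\Sigma)$ correctly reduces everything to the strict inequality \eqref{eq:step}; since the ratio $\cM(\Sigma)\norm{L}^{2}/\cm(\Gamma^{-1}-\tfrac{\beta_{f}}{2}\bQ)$ is strictly below $1$, an $\eta$ strictly between that ratio and $1$ makes both quadratic coefficients positive, and in finite dimensions taking $\tau$ to be their minimum gives $2\bU-\bS\succ\tau\Id$ (self-adjointness is immediate from the block structure). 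Your closing Schur-complement remark is the slicker and essentially standard route: with $\Sigma^{-1}\succ0$, positive definiteness of the block operator is equivalent to $\Gamma^{-1}-\tfrac{\beta_{f}}{2}\bQ-L^{\ast}\Sigma L\succ0$, which \eqref{eq:step} delivers directly via $\cM(L^{\ast}\Sigma L)\leq\cM(\Sigma)\norm{L}^{2}$. One cosmetic slip: the right-hand side of \eqref{eq:step} is only nonnegative (it vanishes if $L=0$), not necessarily strictly positive, but the strict inequality still forces $\Gamma^{-1}-\tfrac{\beta_{f}}{2}\bQ\succ0$, so nothing breaks.
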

 %
%

\begin{algorithm*}[t]
\caption{Distributed STriPD}
\label{alg:SynchSTriPD}
\SetAlgoLined
Initialize: $x^{0}_{i}\in\scrH_{i},y^{0}_{i}\in\scrG_{i}$ for $i=1,\ldots,m$ and $w_{(i,j)}\in\scrW^{\ast}_{(i,j)}$ for all $(i,j)\in E$.\\
Iteration $k$: For all agents $i=1,2,\ldots,m$
\begin{align*}
&\bar{w}_{(i,j),k}^{i}=\tfrac{1}{2}(w^{i}_{(i,j),k}+w^{j}_{(i,j),k})+\tfrac{\tau_{(i,j)}}{2}(A_{ij}x^{i}_{k}+A_{ji}x^{j}_{k}-b_{(i,j)})\quad\forall j\in\scrN_{i},\\
&\bar{v}^{i}_{k}=\prox_{\sigma_{i}h^{\ast}_{i}}(v_{k}^{i}+\sigma_{i}L_{i}x^{i}_{k}),\\
&x^{i}_{k+1}=\prox_{\gamma_{i}g^{\ast}_{i}}[x^{k}_{i}-\gamma \scrF^{i}_{k}(x^{k})-\gamma_{i}L^{\ast}_{i}\bar{v}_{i}^{k}-\gamma_{i}\textstyle{\sum_{j\in\scrN_{i}}}A_{ij}w^{i}_{(i,j),k}]\\
&v^{i}_{k+1}=\bar{v}^{i}_{k}+\tau_{(i,j)}(x^{i}_{k+1}-x^{i}_{k})\\
&w^{i}_{(i,j),k+1}=\bar{w}^{i}_{(i,j),k}+\tau_{i,j}A_{i,j}(x^{i}_{k+1}-x^{i}_{k})\quad\forall j\in\scrN_{i}.
\end{align*}
\end{algorithm*}

We can finally prove the main result of this paper. 
\begin{theorem}\label{thm:main}
Let Assumptions \ref{ass:1}-\ref{ass:noise} hold true. Choose $\Gamma,\Sigma$ such that condition \eqref{eq:step} holds. Then, the stochastic process $\{Z_{k}\}_{k\in\N}$ generated by Algorithm \ref{alg:STriPD} converges a.s. to a random variable $Z_{\infty}\in\Zer(\setG)$, i.e., to a solution of \eqref{eq:Opt}.
\end{theorem}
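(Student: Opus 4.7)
The plan is to combine the estimate \eqref{eq:estimate1}, the variance bound of Lemma \ref{lem:variance}, and the Robbins-Siegmund Lemma \ref{lem:RS} into a quasi-Fej\'{e}r argument. Two consequences will emerge almost surely: (i) $\{\norm{Z_{k}-z^{\ast}}^{2}_{\bS}\}$ converges for every $z^{\ast}\in\Zer(\setG)$, and (ii) the stochastic residual $\norm{\T_{k}(Z_{k})-Z_{k}}$ is summable, so it tends to zero. Once these are in hand, a cluster-point analysis that passes from the stochastic operator $\T_{k}$ to its deterministic limit $T$ in \eqref{eq_tripd} will finish the proof.

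First, I would invoke the quoted Lemma 5.1 of \cite{LatPat17} under the stepsize condition \eqref{eq:step} to obtain $2\bU-\bS\in\scrP_{\tau}(\scrZ)$ for some $\tau>0$. Substituting Lemma \ref{lem:variance} into the expansion of $\norm{Z_{k+1}-z^{\ast}}^{2}_{\bS}$ preceding Lemma \ref{lem:variance} produces the Robbins-Siegmund form
\begin{equation*}
\Ex_{k}[\norm{Z_{k+1}-z^{\ast}}^{2}_{\bS}]\leq (1+2b_{k})\norm{Z_{k}-z^{\ast}}^{2}_{\bS}+2a_{k}-\tau\,\Ex_{k}[\norm{\T_{k}(Z_{k})-Z_{k}}^{2}].
\end{equation*}
Under the natural strengthening $\sum_{k}1/N_{k}<\infty$ of Assumption \ref{ass:batch} (which makes $\{a_{k}\},\{b_{k}\}$ summable), Lemma \ref{lem:RS} yields that $\norm{Z_{k}-z^{\ast}}_{\bS}$ converges a.s.\ for every $z^{\ast}\in\Zer(\setG)$ and that $\sum_{k}\norm{\T_{k}(Z_{k})-Z_{k}}^{2}<\infty$ a.s. In particular, $\{Z_{k}\}$ is a.s.\ bounded and $\T_{k}(Z_{k})-Z_{k}\to 0$ a.s.

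Next, I would move from this stochastic residual to its deterministic counterpart. Since the noise $\phi_{k+1}$ affects only the input of the $g$-proximal step, non-expansiveness of $\prox^{\Gamma^{-1}}_{g}$ together with boundedness of $\Sigma L$ yields an estimate of the form $\norm{\T_{k}(Z_{k})-T(Z_{k})}\leq \kappa\,\norm{\scrF_{k}(X_{k})-\nabla f(X_{k})}_{\Gamma}$ for a constant $\kappa$ depending only on $\Sigma,\Gamma,L$. The a.s.\ boundedness of $\{Z_{k}\}$ combined with Lemma \ref{lem:variance} and the summability of $a_{k},b_{k}$ forces $\sum_{k}\bar{\nu}_{k}<\infty$ a.s.\ through a conditional-martingale argument, hence $\bar{\nu}_{k}\to 0$ a.s.\ and therefore $T(Z_{k})-Z_{k}\to 0$ a.s.

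To conclude, extract a convergent subsequence $Z_{k_{j}}\to Z_{\infty}$ using boundedness in the finite-dimensional product space $\scrZ$. Continuity of $T$ yields $T(Z_{\infty})=Z_{\infty}$, so $Z_{\infty}\in\Fix(T)=\Zer(\setG)$ by the identification lemma preceding Algorithm \ref{alg:STriPD}. Instantiating step one with $z^{\ast}=Z_{\infty}$ shows that $\norm{Z_{k}-Z_{\infty}}_{\bS}$ converges a.s., and since a subsequence tends to zero, the entire sequence converges to $Z_{\infty}$ almost surely. The main obstacle is the stochastic-to-deterministic passage: because Assumption \ref{ass:noise} only controls $\sigma(x)$ by $\sigma_{0}+\sigma_{1}\norm{x-x^{\ast}}^{2}$ and not uniformly, one cannot treat the noise independently of the iterates; the a.s.\ boundedness of $\{Z_{k}\}$ must be secured first by the quasi-Fej\'{e}r step, and only then can one conclude that the heavy-tailed mini-batch noise decays along sample paths.
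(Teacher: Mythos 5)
Your proposal is correct and follows essentially the same route as the paper's proof: the Robbins--Siegmund/quasi-Fej\'{e}r step built on \eqref{eq:estimate1} and Lemma \ref{lem:variance}, the bound $\norm{\T_{k}(Z_{k})-T(Z_{k})}\le\kappa\norm{\scrF_{k}(X_{k})-\nabla f(X_{k})}_{\Gamma}$ to pass from the stochastic to the deterministic residual, and the cluster-point/continuity argument concluding via $\Fix(T)=\Zer(\setG)$. Your explicit requirement $\sum_{k}1/N_{k}<\infty$ is indeed needed for the summability of $a_{k}$ and $b_{k}$ demanded by Lemma \ref{lem:RS} --- a hypothesis the paper leaves implicit beyond Assumption \ref{ass:batch} --- but otherwise the two arguments coincide.
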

\begin{proof}
Using Lemma \ref{lem:variance}, we get 
\begin{align*}
&\Ex_{k}\left[\norm{Z_{k+1}-z^{\ast}}^{2}_{\bS}\right]\leq\norm{Z_{k}-z^{\ast}}^{2}_{\bS}+2\Ex_{k}[\bar{\nu}_{k}]\\
&-\Ex_{k}\left[\norm{\T_{k}(Z_{k})-Z_{k}}^{2}_{2\bU-\bS}\right]\\
&\leq(1+2b_{k})\norm{Z_{k}-z^{\ast}}^{2}_{\bS}-\Ex_{k}[\norm{\T_{k}(Z_{k})-Z_{k}}^{2}_{2\bU-\bS}]+2a_{k}.
\end{align*}
Set $v_{k}=\norm{Z_{k}-z^{\ast}}_{\bS}^{2}$, $t_{k}=2b_{k}$, $\zeta_{k}=2a_{k}$ and $\theta_{k}=\Ex_{k}[\norm{\T_{k}(Z_{k})-Z_{k}}^{2}_{2\bU-\bS}]$, and apply Lemma \ref{lem:RS} to deduce $\Pr\left(\sum_{k\in\N}\theta_{k}<\infty\right)=1$ and that $v_{k}$ converges to a finite random variable $v_{\infty}$ a.s. Furthermore, using \cite[Proposition 2.3]{ComPes15}, we know that $\{Z_{k}\}_{k\in\N}$ is almost surely bounded. Hence, there exists a measurable set $\Omega_{0}\subseteq\Omega$ with $\Pr(\Omega_{0})=1$ such that $\lim_{k\to\infty}\norm{Z_{k}(\omega)-z^{\ast}}=0$ for all $\omega\in\Omega_{0}$, Fix such an event $\omega\in\Omega_{0}$ and sequence $\{k_{j}\}\subset\N$ with $k_{j}\uparrow\infty$. Consider a converging subsequence $\{Z_{k_{j}}(\omega)\}_{j\in\N}$ with $Z_{k_{j}}(\omega)\to Z_\infty(\omega)$. To reduce notational clutter, we omit the relabeling and simply denote $Z_{k}(\omega)$ the converging subsequence. We have to show that $Z_\infty(\omega)\in\Zer(G)$. Set $\mu\eqdef \cm(2\bU-\bS)\geq \tau>0$. We deduce
\begin{align*}
&\norm{\T_{k}(Z_{k})-Z_{k}}^{2}_{2\bU-\bS}\geq \mu\norm{\T_{k}(Z_{k})-Z_{k}}^{2}\\
&=\mu\left[\norm{\T_{k}(Z_{k})-T(Z_{k})}^{2}+\norm{T(Z_{k})-Z_{k}}^{2}\right]\\
&-2\mu\inner{\T_{k}(Z_{k})-T(Z_{k}),Z_{k}-T(Z_{k})}\\
&\geq \tfrac{-\mu(1-\alpha)}{\alpha}\norm{\T_{k}(Z_{k})-T(Z_{k})}^{2}+(1-\alpha)\mu\norm{\T_{k}(Z_{k})-Z_{k}}^{2},
\end{align*}
where in the last inequality we have used the Fenchel-Young inequality.
A simple computation shows that 
\begin{align*}
&\norm{\T_{k}(Z_{k})-T_{k}(Z_{k})}^{2}=\norm{\Sigma_{k}L(\hat{X}_{k}-\bar{x}_{k})}^{2}+\norm{\hat{X}_{k}-\bar{x}_{k}}^{2}\\
&\leq \left(1+\cM(L^{\ast}\Sigma^{2}L)\right)\norm{\hat{X}_{k}-\bar{x}_{k}}^{2}\\
&\leq \tfrac{\left(1+\cM(L^{\ast}\Sigma^{2}L)\right)}{\cm(\Gamma^{-1})}\norm{\hat{X}_{k}-\bar{x}_{k}}^{2}_{\Gamma^{-1}}\\
&\leq \tfrac{\left(1+\cM(L^{\ast}\Sigma^{2}L)\right)}{\cm(\Gamma^{-1})}\norm{\scrF_{k}(X_{k})-\nabla f(X_{k})}^{2}_{\Gamma}.
\end{align*}
Hence, for $\alpha\in(0,1)$, we get  
\begin{align*}
&\Ex_{k}[\norm{\T_{k}(Z_{k})-Z_{k}}^{2}_{2\bU-\bS}]\geq (1-\alpha)\mu\norm{T_{k}(Z_{k}(\omega))-Z_{k}(\omega)}^{2}\\
&-\tfrac{\mu(1-\alpha)}{\alpha}\tfrac{\left(1+\cM(L^{\ast}\Sigma^{2}L)\right)}{\cm(\Gamma^{-1})}\Ex_{k}[\bar{\nu}_{k}](\omega).
\end{align*}
By continuity of the mappings $T(\cdot)$, it follows 
\[
0=\lim_{k\to\infty}\norm{T(Z_{k}(\omega))-Z_{k}(\omega)}^{2}=\norm{T(Z_\infty(\omega))-Z_\infty(\omega)}.
\]
Hence, $Z_\infty\in\Fix(T)=\Zer(\setG)$. As $\omega\in\Omega_{0}$ is arbitrary, the claim follows. 
\end{proof}

\section{Distributed Optimization}
\label{sec:distributed}


In this section we consider a network of agents, whose aim is to solve problem \eqref{eq:DistOpt} in a cooperative way. Consider an undirected graph $G=(V,E)$ over a vertex set $V=\{1,\ldots,m\}$ with edge set $E\subset V\times V$. Each vertex is associated with an agent, which is assumed to have a local memory and computational unit and can only communicate with its neighbors. We define the neighborhood of agent $i$ as $\scrN_{i}=\{j\in V\vert (i,j)\in E\}$. Each agent in the network is characterized by a private cost function 
$f_{i}(x_{i})+g_{i}(x_{i})+h_{i}(L_{i}x_{i}),$ defined over the vector space $\scrH_{i}\equiv\R^{p_{i}}$. Let $\scrH=\prod_{i=1}^{m}\scrH_{i}$. Furthermore, the decisions of the agents in the network are subject to affine constraints, coupling the decisions of neighboring agents. 
\begin{assumption} For each $i=1,\ldots,m$: 
\begin{itemize}
\item[(i)] For all $j\in\scrN_{i}$, $b_{(i,j)}\in\scrE_{(i,j)}\equiv\R^{l_{ij}}$ and $A_{ij}\in\R^{p_{i}\times l_{ij}}$;
\item[(ii)] $g_{i}:\R^{p_{i}}\to(-\infty,\infty],h_{i}:\R^{q_{i}}\to(-\infty,\infty]$ are proper closed convex and lower semi-continuous functions, and $L_{i}\in\R^{p_{i}\times q_{i}}$;
\item[(iii)] $f_{i}:\scrH_{i}\to\R$ is a convex, continuously differentiable and for some $\beta_{i}\geq 0$, $\nabla f_{i}$ is $\beta_{i}$-Lipschitz continuous in the norm $\norm{\cdot}_{\bQ_{i}}$;
\item[(iv)] The graph $G$ is connected;
\item[(v)] The set of solutions of \eqref{eq:DistOpt} is nonempty and there exists $x_{i}\in\rint(\dom g_{i})$ such that $L_{i}x_{i}\in \rint(\dom h_{i})$ and $A_{ij}x_{i}+A_{ji}x_{j}=b_{(i,j)}$ for $(i,j)\in E$. 
\end{itemize}
\end{assumption}

Let $\scrW_{(i,j)}\eqdef\R^{2l_{ij}}$ for all $(i,j)\in E$, with generic element $w_{ij}=[w_{(i,j)}^{i},w_{(i,j)}^{j}]$. The interpretation is that $w_{(i,j)}^{i}$ is controlled by agent $i$ and $w_{(i,j)}^{j}$ is controlled by agent $j$. For each edge $(i,j)\in E$ define the set
$$
\setK_{(i,j)}\eqdef\{(w_{1},w_{2})\in\scrW_{(i,j)}\vert w_{1}+w_{2}=b_{(i,j)}\},
$$
and the linear map $\setN_{(i,j)}:\scrH\to\R^{2l_{ij}}$ by 
$$
\inner{\setN_{(i,j)}x,w_{ij}}=\inner{A_{ij}x_{i},w_{ij}^{i}}+\inner{A_{ji}x_{j},w_{ij}^{j}}\qquad\forall x\in\scrH.
$$
Accordingly, we let $\setN:\scrH\to\prod_{(i,j)\in E}\R^{2l_{ij}}$ to be the operator defined by 
$
\inner{\setN x,w}=\sum_{(i,j)\in E}(\inner{A_{ij}x_{i},w_{(i,j)}^{i}}+\inner{A_{ji}x_{j},w_{(i,j)}^{j}}).$
Using these concepts, we can reformulate problem \eqref{eq:DistOpt} as 
\[
\min_{x_{1},\ldots,x_{m}}\sum_{i=1}^{m}[f_{i}(x_{i})+g_{i}(x_{i})+h_{i}(L_{i}x_{i})]+\sum_{(i,j)\in E}\delta_{\setK_{(i,j)}}(\setN_{(i,j)}x).
\]
Let $\setK\eqdef \prod_{(i,j)\in E}\setK_{(i,j)}$ and $L:\scrH\to\prod_{i}\R^{q_{i}}$ be defined by 
$Lx=[L_{1}x_{1},\ldots,L_{m}x_{m}]$. Set $Dx=[Lx,\setN x]$. Define the functions $F(x)\eqdef \sum_{i=1}^{m}f_{i}(x_{i})$, $G(x)\eqdef\sum_{i=1}^{m}g_{i}(x_{i})$ and $H(Dx)\eqdef h(Lx)+\delta_{\setK}(\setN x)$. With this notation, we have converted problem \eqref{eq:DistOpt} to problem \eqref{eq:Opt} with the functions $F,G$ and $H$. 

As in Section \ref{sec:algorithm}, the primal-dual optimality conditions can be written in the compact form as an inclusion problem involving the (maximally monotone)  operators
$$\begin{aligned}
&\setA(v,w,x)=[\partial h^{\ast}(v),\partial\delta_{\setC}^{\ast}(w),\partial g(x)],\\
&\setM(v,w,x)=[-Lx,-\setN x,L^{\ast}v+\setN^{\ast}w], \\
&\setC(v,w,x)=[0,0,\nabla f(x)].
\end{aligned}$$
The dual variable is the pair $y=[v,w]\in\scrG$ and we can apply Algorithm \ref{alg:STriPD} directly to solve the distributed optimization problem \eqref{eq:DistOpt}. The resulting stochastic process $\{Z_{k}\}_{k\geq 0}$ decomposes to agent-specific updates as described in Algorithm \ref{alg:SynchSTriPD} (cf. \cite{TriPD}). Given the identification of the operators characterizing the optimality conditions of a primal-dual pair, the convergence of the synchronous STriPD (Algorithm \ref{alg:SynchSTriPD}) follows immediately from Theorem \ref{thm:main}. 

\begin{figure*}[h!]
\centering
\begin{subfigure}[t]{0.3\textwidth}
\includegraphics[width=\columnwidth]{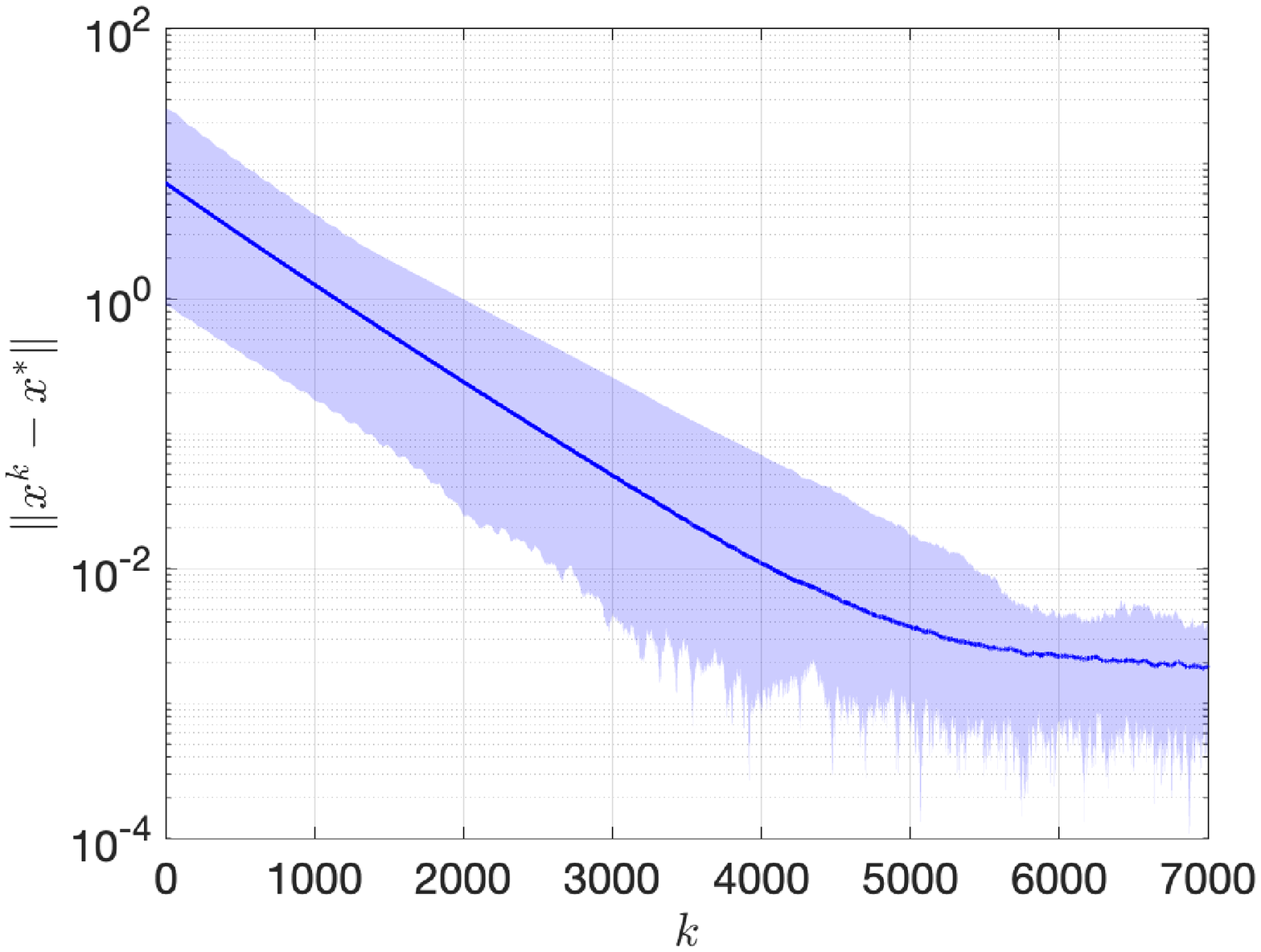}
\caption{Distance from the solution.}\label{fig_sol}
\end{subfigure}
\begin{subfigure}[t]{0.3\textwidth}
\includegraphics[width=\columnwidth]{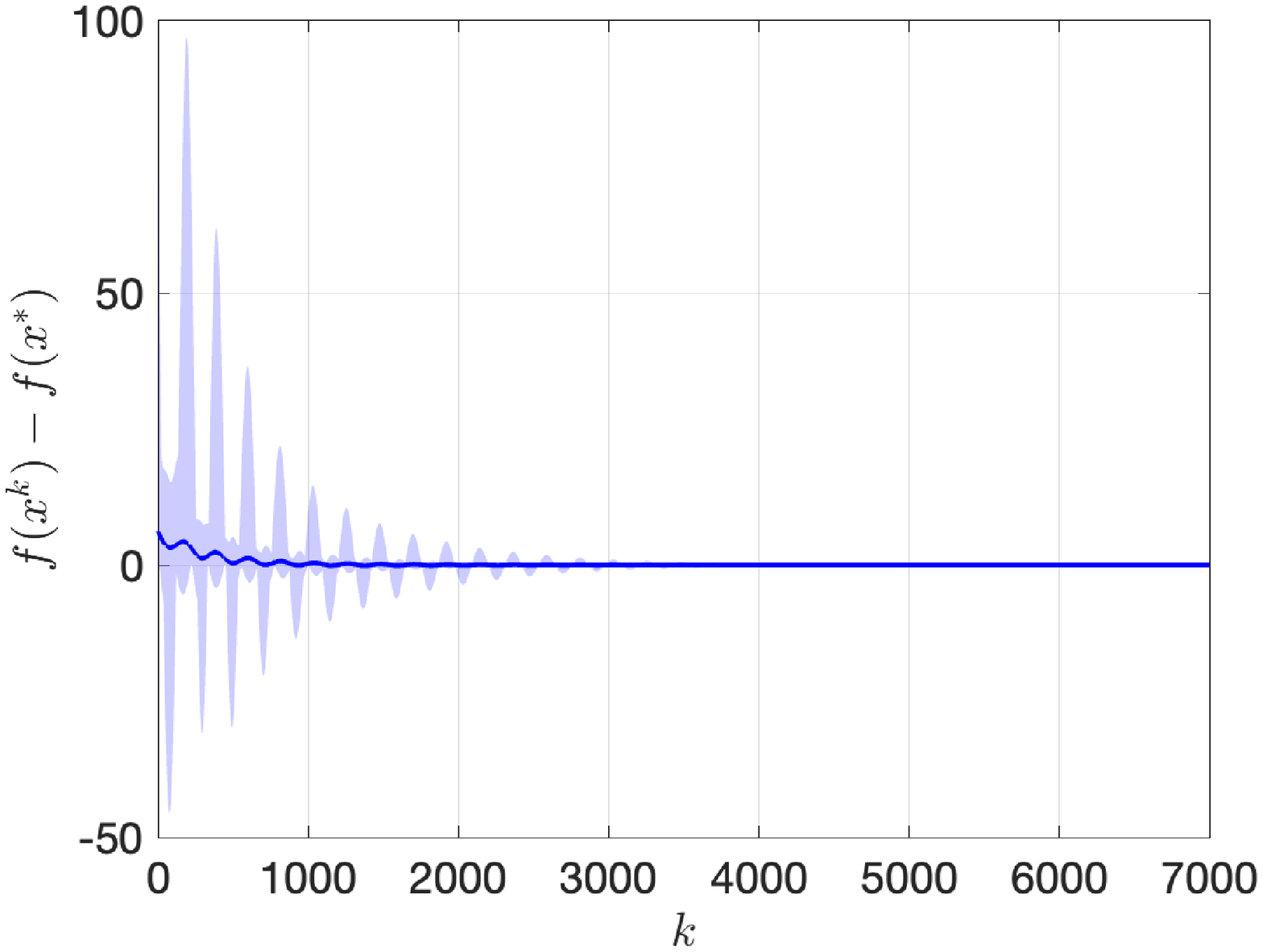}
\caption{Distance from minimum cost.}\label{fig_cost}
\end{subfigure}
\begin{subfigure}[t]{0.3\textwidth}
\includegraphics[width=\columnwidth]{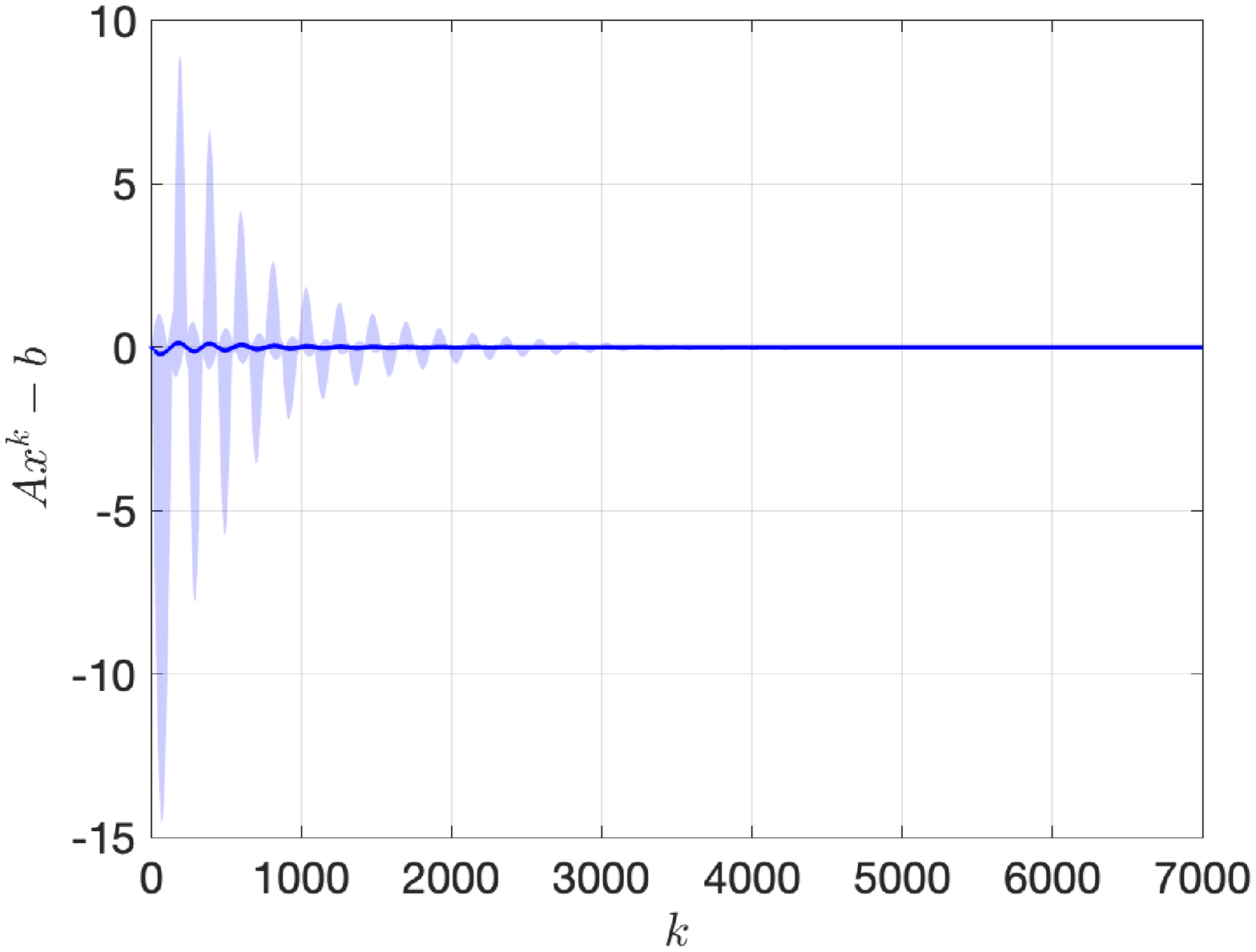}
\caption{Feasibility of the iterates.}\label{fig_constr}
\end{subfigure}
\end{figure*}

\section{Numerical Example}


We test Algorithm \ref{alg:STriPD} on an economic dispatch problem for power grids, inspired by \cite{yi2016, li2021}. Consider $n$ control areas, each with a generator that supply power $x_i\in\R$ and a local demand $b_i\in \R_+$ that should be satisfied. Each generator has security bound of the form $\underline x_i\leq x_i\leq \bar x_i$, with $\underline x_i, \bar x_i\geq 0$ for all $i=1,\dots,m$. Each area has a local generation cost $f_i(x_i):\R\to\R$ so that the optimization problem is to minimize the overall cost $f(x)=\sum_{i=1}^m f_i(x_i)$, subject to the satisfaction of the demand:
\begin{equation}\label{eq_sim}
\begin{cases}
\min\limits_{x_i,i=1,\dots,m} & f(x)=\sum_{i=1}^m f_i(x_i)\\
\quad\; \text{s.t.} & \sum_{i=1}^m x_i=\sum_{i=1}^m b_i\\
 & \underline x_i\leq x_i\leq \bar x_i, i=1,\dots,m
\end{cases}
\end{equation}
%
To write the problem in \eqref{eq_sim} in the form of problem \eqref{eq:DistOpt}, let us take $g_i(x_i)=\delta_{X_i}(x_i)$, i.e., the indicator function of the local constraints $X_i\eqdef[\underline x_i,\bar x_i]$, and $h(Lx)=\delta_C(Lx)$, where $L=\mathbf{1}_{m}$ and $C=\{y\in\R^m:y=b\}$ represents the coupling constraints.

Similarly to \cite{li2021}, we consider $m=5$ generators with cost functions $f_i(x_i)=\Ex[q_i(\xi)x_i^2+p_ix_i]$ where the coefficients are chosen with mean $q=[0.094, 0.078, 0.105, 0.082, 0.074]$ and $p=[1.22, 3.41, 2.53, 4.02, 3.17]$. The random variable is drawn according to a normal distribution with mean $q$. The local bounds on the supply power are $\underline x=[10,8,3.8,5.4,4.2]$ and $\bar x=[80 ,60,40,45,18]$ while the demands take the values $b=[35,20,25,30,10]$. 
We run the algorithm 100 times and plot in a thick blue line the average results; the transparent areas indicate the minimum and maximum values reached.
Figure \ref{fig_sol} displays the distance of the iterates from the solution and Fig \ref{fig_cost} illustrates the distance of the cost from the optimal value. In Figure \ref{fig_constr}, we show that asymptotically the constraints are satisfied. 

%
%
%

\section{Conclusions}
\label{sec:conclusion}
We propose a stochastic triangular preconditioned primal-dual algorithm (STriPD) for solving a large family of structured convex optimization problems, and multi-agent versions thereof. Many extensions of the present work will be investigated in a more elaborate investigation. Such extensions will include block-coordinate descent implementations, allowing for asynchronous updates in the distributed case. Furthermore, we will be investigating the iteration and oracle complexity of the method in detail.




%



\end{document}